\documentclass[a4paper,11pt]{article}

\usepackage{amsfonts}
\usepackage{url}
\usepackage{amssymb}
\usepackage{amsthm}
\usepackage{latexsym,amsmath,amssymb,mathtools}

\addtolength{\hoffset}{-1cm} \addtolength{\textwidth}{1.5cm}
\usepackage[all]{xy}

\newtheorem{proposition}{Proposition}
\newtheorem{definition}{Definition}
\newtheorem{corollary}{Corollary}

\newtheoremstyle{obs}
{3pt}
{3pt}
{}
{}
{\bfseries}
{.}
{.5em}
{}
\theoremstyle{obs}

\newtheorem{example}{Example}

\newcommand{\hooklongrightarrow}{\lhook\joinrel\longrightarrow}

\def\tabaddress#1{{\small\it\begin{tabular}[t]{c}#1
			\\[1.2ex]\end{tabular}}}

\parskip=5pt

\title{Presymplectic integrators for optimal control problems via retraction maps}
\author{{\sc M. Barbero-Li\~n\'an}
	\thanks{{\bf e}-{\it mail}: m.barbero@upm.es}
	\\
	\tabaddress{Departamento de Matem\'atica Aplicada, Universidad Polit\'ecnica de Madrid, \\ Av. Juan de Herrera 4, 28040 Madrid, Spain}\\
   {\sc D.\ Mart\'{\i}n de Diego\thanks{{\bf e}-{\it mail}:
			david.martin@icmat.es}} \\
		\tabaddress{	Instituto de Ciencias Matem\'aticas (CSIC-UAM-UC3M-UCM)\\ C/Nicol\'as Cabrera 13-15, 28049 Madrid, Spain}
}

\date{\today}

\begin{document}
	\maketitle

\begin{abstract} 
Retractions maps are used to define a discretization of the tangent bundle of the configuration manifold as two copies of the configuration manifold where the dynamics take place. Such discretization maps can be conveniently lifted to the cotangent bundle so that symplectic integrators are constructed for Hamilton's equations. Optimal control problems are provided with a Hamiltonian framework by Pontryagin's Maximum Principle. That is why we use discretization maps and the integrability algorithm to obtain presymplectic integrators for optimal control problems. 

\vspace{4mm}

\textbf{Keywords:} Retraction maps, geometric integrators, presymplectic methods, optimal control problems.
\end{abstract}

\section{Introduction}

Retraction maps first appear in the literature as a topological construction in 1931~\cite{1931Borsuk}. They did not become a useful tool for designing optimization algorithms on matrix manifolds until the beginning of the XXIst century, see for instance~\cite{AbMaSeBookRetraction}. Recently, we have developed in~\cite{21MBLDMdD} a new notion of discretization map arisen from retraction maps that focuses on discretizing the configuration manifold, instead of the equations of motion. That provides a new approach to discrete mechanics that started with the foundational work~\cite{MW_Acta} based on discretizing the variational principles that define the equations of motion. 

When Lagrangian or Hamiltonian systems are considered, the above-mentioned discretization map is built exploiting the properties of the tangent and cotangent bundle structures~\cite{TuHamilton,YaIs73}. In the Hamiltonian framework we obtain a systematic procedure to construct symplectic numerical methods~\cite{21MBLDMdD}, as in~\cite{hairer}.

In this paper we aim at constructing geometric integrators for optimal control problems. In 1958, Pontryagin's Maximum Principle~\cite{Pontryagin} provided necessary conditions for optimal solutions. Only a few years later, the discrete version of such conditions to obtain numerical methods for optimal control problems appeared~\cite{64Jordan}. Decades later, geometric integrators for optimal control problems have been studied and characterized~\cite{2009ChybaSymplIntOpControl,2011SinaJungeMa,2013FerKoDMdD}, even for singular optimal control problems~\cite{2009DelgadoIbort}. 

Pontryagin's Maximum Principle~\cite{Pontryagin} provides the optimal control problem with Hamiltonian framework~\cite{1997Jurdjevic}.  The results from symplectic geometry~\cite{90GuiStern,LiMarle} together with the integrability algorithm~\cite{1995MMT} can be used to identify the final submanifold where the solutions live so that the discretization maps can be used. A symplectic integrator can be defined as a Lagrangian submanifold~\cite{Weinstein}. However, the geometric integrators for optimal control probelms in this paper will not preserve the symplectic 2-form, but a presymplectic 2-form~\cite{GuzMarr}.

The paper is organized as follows. After recalling Hamilton's equations in Section~\ref{Sec:HamEq}, we summarize how the discretization maps in~\cite{21MBLDMdD} are used to construct symplectic integrators in Section~\ref{Sec:DiscreteMap}. On the other hand, we introduce optimal control problems and how they can be associated with a Morse family that will be useful to run the integrability algorithm~\cite{15Morse}. As a result, using the results in the Appendix, we can construct presymplectic integrators for optimal control problems and provide an example in Section~\ref{Sec:Presympl}.

\section{Hamilton's equations}\label{Sec:HamEq}
As described in~\cite{LiMarle}, the cotangent bundle $T^*Q$ of a  differentiable manifold $Q$ is equipped with a canonical exact symplectic structure $\omega_Q=-d\theta_Q$, where $\theta_Q$ is the canonical 1-form on $T^*Q$. In canonical bundle coordinates $(q^i, p_i)$ on $T^*Q$, 
$
\theta_Q= p_i\, \mathrm{d}q^i$ and
$\omega_Q= \mathrm{d}q^i\wedge \mathrm{d}p_i\; .
$
The Hamiltonian vector field associated with a Hamiltonian function $H: T^*Q\rightarrow {\mathbb R}$ must satisfy:
$
\imath_{X_H}\omega_Q=\mathrm{d}H\; , 
$
whose integral curves are solution to Hamilton's equations: 
\begin{equation*}
	\frac{\mathrm{d}q^i}{\mathrm{d}t}=\frac{\partial H}{\partial p_i}\; ,\quad
	\frac{\mathrm{d}p_i}{\mathrm{d}t}=-\frac{\partial H}{\partial q^i}\; .
\end{equation*}
Some fundamental properties of the Hamiltonian dynamics are:
\begin{itemize}
	\item  Preservation of energy, that is, the Hamiltonian function is preserved:  $$0=\omega_Q(X_H, X_H)=dH(X_H)=X_H(H)\,.$$
	
	\item Preservation of the symplectic form, that is, the Lie derivative of the 2-form $\omega_Q$ with respect to the Hamiltonian vector field vanishes: $L_{X_H}\omega_Q=0$. Equivalently, if $\{\phi^t_{X_H}\}$ is the flow of $X_H$, then
	\[
	(\phi^t_{X_H})^*\omega_Q=\omega_Q\; .
	\]
\end{itemize}
Symplectic integrators~\cite{hairer} were designed to preserve the configuration manifold and preserve the canonical symplectic form.

\section{From retraction maps to discretization maps}  \label{Sec:DiscreteMap}
As described in~\cite{AbMaSeBookRetraction}, a retraction map on a manifold $M$ is a smooth map 
$R\colon U\subseteq TM \rightarrow M$ where $U$ is an open subset containing the zero section  of the tangent bundle 
such that the restriction map $R_x=R_{|T_xM}\colon T_xM\rightarrow M$ satisfies
\begin{enumerate}
	\item $R_x(0_x)=x$ for all $x\in M$,

	\item 
	${\rm D}R_x(0_x)=T_{0_x}R_x={\rm Id}_{T_xM}$ where we identify $T_{0_x}T_xM\simeq T_xM$.
\end{enumerate}

\begin{example}
If  $(M, g)$ is a Riemannian manifold, then the exponential map $\hbox{exp}^g: U\subset TM\rightarrow M$ is a typical example of retraction map: 
$
\hbox{exp}^g_x(v_x)=\gamma_{v_x}(1), 
$
where $\gamma_{v_x}$ is the unique  Riemannian geodesic satisfying $\gamma_{v_x}(0)=x$ and $\gamma'_{v_x}(0)=v_x$~\cite{doCarmo}. 
\end{example}

In \cite{21MBLDMdD}  retraction maps have been used to define discretization maps $R_d\colon U \subset TM  \rightarrow M\times M$, where  $U$ is an open neighbourhood of the zero section of $TM$, 
\begin{eqnarray*}
	R_d\colon U \subset TM & \longrightarrow & M\times M\\
	v_x & \longmapsto & (R^1(v_x),R^2(v_x))\, .
\end{eqnarray*}
Discretization maps satisfy the following properties:
\begin{enumerate}
	\item $R_d(0_x)=(x,x)$, for all $x\in M$.
	\item $T_{0_x}R^2_x-T_{0_x}R^1_x={\rm Id}_{T_xM} \colon T_{0_x}T_xM\simeq T_xM \rightarrow T_xM$ is equal to the identity map on $T_xM$ for any $x$ in $M$.
\end{enumerate}
Thus, the discretization map $R_d$ is a local diffeomorphism.

\begin{example}
Examples of discretization maps on Euclidean vector spaces are:
\begin{itemize}
	\item Explicit Euler method:  $R_d(x,v)=(x,x+v).$
	\item Midpoint rule:  $R_d(x,v)=\left( x-\dfrac{v}{2}, x+\dfrac{v}{2}\right).$
		\item $\theta$-methods with $\theta\in [0,1]$:  \hspace{3mm} $R_d(x,v)=\left( x-\theta \, v, x+ (1-\theta)\, v\right).$
\end{itemize}
\end{example}


\subsection{Cotangent lift of discretization maps}

We want to define a discretization map on $T^*Q$, that is, 
$R^{T^*}_d: TT^*Q \rightarrow T^*Q\times T^*Q$. The domain lives where the Hamiltonian vector field takes value. Such a map will be obtained by cotangently lifting a discretization map $R_d\colon TQ\rightarrow Q\times Q$ so that the construction $R^{T^*}_d$ will be a symplectomorphism. In order to do that, we need the following three symplectomorphisms (see \cite{21MBLDMdD} for more details): 
\begin{itemize}
\item The cotangent lift of a diffeomorphism $F: M_1\rightarrow M_2$ defined by:
	\begin{equation*} 
	\hat{F}: T^*M_1 \longrightarrow  T^*M_2 \mbox{ such that } 
\hat{F}=(TF^{-1})^*.
	\end{equation*}
	\item The canonical symplectomorphism:
	\begin{equation*} \alpha_Q\colon T^*TQ  \longrightarrow  TT^*Q  \mbox{ such that }  \alpha_Q(q,v,p_q,p_v)= (q,p_v,v,p_q).
	\end{equation*}

	\item  The symplectomorphism between $(T^*(Q\times Q), \omega_{Q\times Q})$     and   
	$(T^*Q\times T^*Q, \Omega_{12}=pr_2^*\omega_Q-pr^*_1\omega_Q)$:
		\begin{equation*}
	\Phi:T^*Q\times T^*Q \longrightarrow T^*(Q\times Q)\; , \; 
	\Phi(q_0, p_0; q_1, p_1)=(q_0, q_1, -p_0, p_1).	\end{equation*}
	\end{itemize}
The following diagram summarizes the construction procress from $R_d$ to $R_d^{T^*}$:
	\begin{equation*}
\xymatrix{ {{TT^*Q }} \ar[rr]^{{{R_d^{T^*}}}}\ar[d]_{\alpha_{Q}} && {{T^*Q\times T^*Q }}  \\ T^*TQ \ar[d]^{\pi_{TQ}}\ar[rr]^{	\widehat{R_d}}&& T^*(Q\times Q)\ar[u]_{\Phi^{-1}}\ar[d]^{\pi_{Q\times Q}}\\ TQ \ar[rr]^{R_d} && Q\times Q }
\end{equation*}

\begin{proposition}\cite{21MBLDMdD}
	Let $R_d\colon TQ\rightarrow Q\times Q$ be a discretization map on $Q$. Then $${{R_d^{T^*}=\Phi^{-1}\circ \widehat{R_d}\circ \alpha_Q\colon TT^*Q\rightarrow T^*Q\times T^*Q}}$$ 
is a discretization map  on $T^*Q$.
\end{proposition}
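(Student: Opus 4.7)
The plan is to verify directly the two defining properties of a discretization map for the composite $R_d^{T^*}=\Phi^{-1}\circ\widehat{R_d}\circ\alpha_Q$. I would work in canonical bundle coordinates, $(q^i,p_i,\dot q^i,\dot p_i)$ on $TT^*Q$ and $(q^i,v^i,p_{q,i},p_{v,i})$ on $T^*TQ$; each of the three factors then has an explicit coordinate formula. Writing $R_d=(R^1,R^2)$, set $A=\partial R^1/\partial v|_{(q,0)}$ and $B=\partial R^2/\partial v|_{(q,0)}$; the hypotheses on $R_d$ give $R^i(q,0)=q$ for all $q$, $\partial_q R^i|_{(q,0)}=\mathrm{Id}$, and the discretization identity $B-A=\mathrm{Id}$.

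For property 1 I would trace $0_{\alpha_q}=(q,p,0,0)$ through the composition. Using the formula for $\alpha_Q$ it maps to the covector $(q,0,0,p)\in T^*TQ$ based at $0_q\in TQ$. The Jacobian $T_{(q,0)}R_d$ is the block matrix $\bigl(\begin{smallmatrix}I&A\\I&B\end{smallmatrix}\bigr)$, whose inverse is $\bigl(\begin{smallmatrix}B&-A\\-I&I\end{smallmatrix}\bigr)$ precisely because $B-A=\mathrm{Id}$. Applying $\widehat{R_d}$ (the transpose inverse Jacobian acting on the fiber) produces a covector at $(q,q)\in Q\times Q$ with components $(-p,p)$, and $\Phi^{-1}(q,q,-p,p)=((q,p),(q,p))=(\alpha_q,\alpha_q)$, as required.

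For property 2 I would evaluate $T_{0_{\alpha_q}}R_d^{T^*}$ on a vertical vector, modelled by the curve $t\mapsto(q,p,t\dot q,t\dot p)$. After propagating and differentiating at $t=0$, the base coordinates of $R^{T^*,1}$ and $R^{T^*,2}$ acquire velocities $A\dot q$ and $B\dot q$, so their difference $(B-A)\dot q=\dot q$ delivers the first half of the identity. The fibre coordinates come from differentiating $(J_t^T)^{-1}(t\dot p,p)^T$ at $t=0$ with $J_t=T_{(q,t\dot q)}R_d$, and split into a linear-in-$\dot p$ contribution (which produces $\dot p$ in the combination $P_0+P_1$ after the sign flip built into $\Phi^{-1}$, again by $B-A=\mathrm{Id}$) and a second-order contribution governed by $\dot J_0$.

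The crux is the cancellation of this second-order contribution. Its coefficient in $P_0+P_1$ involves the difference $\partial_q\partial_v R^2|_{(q,0)}-\partial_q\partial_v R^1|_{(q,0)}$, which is a priori uncontrolled. The saving observation is that $B-A=\mathrm{Id}$ holds at every $q$ along the zero section, so differentiating in $q$ yields $\partial_q\partial_v R^1|_{(q,0)}=\partial_q\partial_v R^2|_{(q,0)}$, and a short block-matrix computation then shows the second-order piece drops out from $P_0+P_1$. Combined with the base calculation, one concludes $T_{0_{\alpha_q}}R^{T^*,2}-T_{0_{\alpha_q}}R^{T^*,1}=\mathrm{Id}$ on the vertical fibre, which is property 2.
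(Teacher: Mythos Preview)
Your coordinate verification is correct. Tracing $0_{\alpha_q}$ through $\alpha_Q$, then $\widehat{R_d}$ using the block inverse $\bigl(\begin{smallmatrix}B&-A\\-I&I\end{smallmatrix}\bigr)$, and finally $\Phi^{-1}$ does give $(\alpha_q,\alpha_q)$. For property~2, the split into a $\dot p$--contribution and a $\dot J_0$--contribution is the right organization: the $\dot p$--part yields $(B-A)^T\dot p=\dot p$ in $P_0+P_1$, and in the second-order piece the $\partial_v^2R^i$ terms cancel pairwise while the remaining term is $-(B-A)^T(C_2-C_1)^Tp$ with $C_i=\partial_v\partial_qR^i|_{(q,0)}\dot q$; your observation that $B(q)-A(q)=I$ for all $q$ forces $C_1=C_2$ is exactly what makes this vanish. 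The base part $(B-A)\dot q=\dot q$ is immediate. So both defining properties hold.

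As for comparison with the paper: the paper does not prove this proposition here. It is quoted from~\cite{21MBLDMdD} and stated without argument, so there is nothing in the present text to compare your proof against. Your direct computation is a self-contained and adequate justification; the cited reference establishes the result more invariantly by showing that cotangent lifts of diffeomorphisms interact with the zero section and with vertical derivatives in the expected way, but the content is the same.
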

\begin{corollary}\cite{21MBLDMdD} The discretization map
 ${{R_d^{T^*}}}=\Phi^{-1}\circ 	(TR_d^{-1})^* \circ \alpha_Q\colon T(T^*Q)\rightarrow T^*Q\times T^*Q$ is a symplectomorphism between $(T(T^*Q), {\rm d}_T \omega_Q)$ and $(T^*Q\times T^*Q, \Omega_{12})$.
\end{corollary}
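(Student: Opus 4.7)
The strategy is to observe that $R_d^{T^*}$ is built as the composition of three maps, each of which is individually a symplectomorphism with respect to compatible symplectic structures, so the composition is automatically a symplectomorphism. Once this observation is made, the task reduces to verifying that the symplectic form on the source and target of each factor matches the form prescribed in the statement, so that the chain telescopes correctly.

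First I would recall the three building blocks. The map $\alpha_Q\colon T^*TQ\to TT^*Q$ is the Tulczyjew canonical isomorphism, and it is classical (and can be checked in the coordinates $(q,v,p_q,p_v)\mapsto(q,p_v,v,p_q)$ displayed above) that it intertwines the canonical symplectic form $\omega_{TQ}$ on $T^*TQ$ with the tangent lift $\mathrm{d}_T\omega_Q$ on $TT^*Q$; equivalently, $\alpha_Q^*(\mathrm{d}_T\omega_Q)=\omega_{TQ}$. Next, $\widehat{R_d}=(TR_d^{-1})^*\colon T^*TQ\to T^*(Q\times Q)$ is the cotangent lift of the diffeomorphism $R_d$, and cotangent lifts of diffeomorphisms are always symplectomorphisms between the canonical symplectic structures, i.e.\ $\widehat{R_d}^{\,*}\omega_{Q\times Q}=\omega_{TQ}$. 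Finally, the map $\Phi\colon T^*Q\times T^*Q\to T^*(Q\times Q)$ was introduced precisely as a symplectomorphism between $(T^*Q\times T^*Q,\Omega_{12})$ and $(T^*(Q\times Q),\omega_{Q\times Q})$, so $\Phi^{-1}$ is a symplectomorphism in the opposite direction with $(\Phi^{-1})^*\Omega_{12}=\omega_{Q\times Q}$.

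With these three ingredients in hand, I would conclude by a direct telescoping computation: starting from the target with the $2$-form $\Omega_{12}$ on $T^*Q\times T^*Q$ and pulling back along $R_d^{T^*}=\Phi^{-1}\circ\widehat{R_d}\circ\alpha_Q$ (reading $\alpha_Q$ as the appropriate isomorphism matching the direction of composition in the corollary, as is clear from the diagram), one obtains
\[
(R_d^{T^*})^*\Omega_{12}=\alpha_Q^{*}\bigl(\widehat{R_d}^{\,*}\bigl((\Phi^{-1})^*\Omega_{12}\bigr)\bigr)=\alpha_Q^{*}\bigl(\widehat{R_d}^{\,*}\omega_{Q\times Q}\bigr)=\alpha_Q^{*}\omega_{TQ}=\mathrm{d}_T\omega_Q,
\]
which is exactly the claim. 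The diffeomorphism property was already established in the previous proposition, so nothing further is needed.

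The only nontrivial step is the assertion that $\alpha_Q$ intertwines $\omega_{TQ}$ with $\mathrm{d}_T\omega_Q$; this is the one place where a short coordinate verification (or an appeal to Tulczyjew's construction) is required, since the other two factors are symplectomorphisms by general principles already recalled in the paper. Everything else is formal composition of pull-backs.
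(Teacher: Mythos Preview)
Your argument is correct and is precisely the one the paper sets up: the corollary is quoted without proof from \cite{21MBLDMdD}, but the three bullet points preceding the diagram record each of $\alpha_Q$, $\widehat{R_d}$, and $\Phi$ as symplectomorphisms for the relevant pairs of forms, so the telescoping pullback you write is the intended justification. Your parenthetical remark about the direction of $\alpha_Q$ is also apt, since the text defines $\alpha_Q\colon T^*TQ\to TT^*Q$ while the diagram and the composition formula use it in the opposite sense.
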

\begin{example}\label{example3} On $Q={\mathbb R}^n$ the discretization map 
	$R_d(q,v)=\left(q-\frac{1}{2}v, q+\frac{1}{2}v\right)$ is cotangently lifted to
		$$R_d^{T^*}(q,p,\dot{q},\dot{p})=\left( q-\dfrac{1}{2}\,\dot{q}, p-\dfrac{\dot{p}}{2}; \; q+\dfrac{1}{2}\, \dot{q}, p+\dfrac{\dot{p}}{2}\right)\, .$$
\end{example}

\subsection{Symplectic methods for Hamilton's equations}

For Hamilton's equation we automatically produce a symplectic integrator using 
the discretization map $R_d^{T^*}\colon TT^*Q \rightarrow T^*Q\times T^*Q$ which is the cotangent lift of a discretization map on $Q$.

\begin{proposition}\cite{21MBLDMdD}
The numerical method defined by 
	$$h\, X_H \left(\tau_{TQ}\left( \left(R^{TT^*Q}_d\right)^{-1}(q_k, p_k; q_{k+1}, p_{k+1})\right)\right)=\left(R^{TT^*Q}_d\right)^{-1}(q_k, p_k; q_{k+1}, p_{k+1})\,, $$
	is a symplectic integrator for the Hamiltonian system given by $H\colon T^*Q\rightarrow \mathbb{R}$.
	\end{proposition}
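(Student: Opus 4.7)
The plan is to reformulate the numerical scheme geometrically as an image of a Lagrangian submanifold under the symplectomorphism provided by the previous Corollary, and then recognize that such an image is the graph of a symplectic map.

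First I would rewrite the defining equation in geometric form. Setting $w=(R^{T^*}_d)^{-1}(q_k,p_k;q_{k+1},p_{k+1})\in TT^*Q$ and letting $z=\tau_{T^*Q}(w)\in T^*Q$, the scheme says $w=h\,X_H(z)$. Equivalently, the graph of the one-step map $F_h\colon (q_k,p_k)\mapsto(q_{k+1},p_{k+1})$ is precisely
\[
\mathrm{Graph}(F_h)=R^{T^*}_d\bigl(\{\,h\,X_H(z)\,:\,z\in T^*Q\,\}\bigr)\subset T^*Q\times T^*Q .
\]
So the question reduces to showing that this graph is a Lagrangian submanifold of $(T^*Q\times T^*Q,\Omega_{12})$, because a submanifold that is the graph of a diffeomorphism $F_h$ is Lagrangian with respect to $\Omega_{12}=pr_2^*\omega_Q-pr_1^*\omega_Q$ if and only if $F_h^*\omega_Q=\omega_Q$.

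The next step is to show that $\mathrm{Im}(h\,X_H)\subset TT^*Q$ is Lagrangian with respect to the tangent symplectic form $\mathrm{d}_T\omega_Q$. Here I would invoke the standard fact: if $X$ is a vector field on a symplectic manifold $(M,\omega)$, the image of $X$, viewed as a section $M\to TM$, is Lagrangian in $(TM,\mathrm{d}_T\omega)$ iff $\mathcal{L}_X\omega=0$. Since $X_H$ is Hamiltonian, $\mathcal{L}_{X_H}\omega_Q=0$, and the same holds for $hX_H$ with $h$ constant; hence its image is Lagrangian of the right dimension.

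Now I would apply the Corollary: $R^{T^*}_d$ is a symplectomorphism from $(TT^*Q,\mathrm{d}_T\omega_Q)$ to $(T^*Q\times T^*Q,\Omega_{12})$. Symplectomorphisms send Lagrangian submanifolds to Lagrangian submanifolds, so $\mathrm{Graph}(F_h)$ is Lagrangian with respect to $\Omega_{12}$, and by the remark above $F_h^*\omega_Q=\omega_Q$, proving the integrator is symplectic.

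The main obstacle is the middle step: carefully justifying that the image of a symplectic vector field defines a Lagrangian submanifold of the tangent bundle with respect to the tangent lift $\mathrm{d}_T\omega_Q$. Everything else is a direct chase of the diagram defining $R^{T^*}_d$ and the standard graph/Lagrangian correspondence; one should also briefly note that for $h$ sufficiently small the implicit equation defines $F_h$ uniquely, which follows because $R^{T^*}_d$ is a local diffeomorphism near the zero section and $hX_H$ is small there.
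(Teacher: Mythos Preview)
The paper does not supply its own proof of this proposition; it is simply quoted from~\cite{21MBLDMdD}. Your outline is correct and is precisely the argument one expects in that reference: interpret the scheme as $R_d^{T^*}$ applied to the image of the (rescaled) Hamiltonian vector field, use that this image is Lagrangian in $(TT^*Q,\mathrm{d}_T\omega_Q)$ because $\mathcal{L}_{X_H}\omega_Q=0$, and then transport Lagrangianity through the symplectomorphism of the Corollary to conclude that the graph of the one-step map is Lagrangian in $(T^*Q\times T^*Q,\Omega_{12})$, i.e.\ that the map is symplectic.

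One small remark on the step you flag as the main obstacle: the cleanest justification is the identity $X^*(\mathrm{d}_T\omega)=\mathcal{L}_X\omega$ for any vector field $X$ viewed as a section $M\to TM$, which follows from the defining property of the complete (tangent) lift of a form (see~\cite{YaIs73}); this gives isotropy immediately, and the dimension count $\dim T^*Q=\tfrac{1}{2}\dim TT^*Q$ gives Lagrangianity. Your closing comment on well-definedness of $F_h$ for small $h$ via the local diffeomorphism property of $R_d^{T^*}$ near the zero section is the right caveat.
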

\section{Optimal control problems and Morse families}\label{Sec:OptMorse}
An optimal control problem (OCP) is given by a vector field depending on parameters called controls, a cost function and some end-point conditions. A solution of an OCP must be an integral curve of the vector field for specific controls, $\dot{q}=X(q,u)$, so that the functional $\int^{t_f}_{t_0} F(q(t),u(t))\, {\rm d }t,$ is minimized and the end-point conditions satisfied. 

Typically, OCP are solved using Pontryagin's Maximum Principle~\cite{1997Jurdjevic,Pontryagin} that provides the problem with a Hamiltonian framework. 
Let $U$ be the set of admissible controls, the associated Pontryagin's Hamiltonian function is:
\begin{equation*}
H \colon T^*Q\times U \rightarrow \mathbb{R}\, ,\quad 
H(q,p,u)=\langle p, X(q,u) \rangle - F(q,u)\, ,\end{equation*}
where $\langle\cdot, \cdot \rangle$ denotes the natural pairing between $T_q^*Q$ and $T_qQ$.
A Morse family is another geometric object that can be used to define Lagrangian submanifolds. Such a notion was first  introduced by L. H\"ormander~\cite{Hormander}.  It is proved in~\cite{15Morse} that the Pontryagin's Hamiltonian function could be a Morse family over the projection ${\rm pr}_1\colon T^*Q\times U \rightarrow T^*Q$ onto the first factor if the image of the differential of $H$ and the conormal bundle
\begin{equation*}(\ker \, {\rm T}\pi)^0=\left\{\alpha\in T_\mu ^*(T^*Q\times U)\; |\; \langle \alpha, v\rangle=0, \hbox{ for all } v\in \ker T_{\mu}\pi\right\}\subset T^*(T^*Q\times U)
\end{equation*}
are transverse in $T^*(T^*Q\times U)$, that is, 
\begin{equation*} {\rm T}_\alpha( {\rm d}H(T^*Q\times U))+T_\alpha (\ker \, {\rm T}\pi)^0=T_\alpha (T^*(T^*Q\times U)),
\end{equation*}
for all $\alpha\in (\ker \, {\rm T}\pi)^0 \cap  {\rm d}H(T^*Q\times U)  \subseteq T^*(T^*Q\times U)$.

\begin{proposition}\cite{15Morse} Pontryagin's Hamiltonian   $H\colon T^*Q\times U  \rightarrow \mathbb{R}$ defines a \textbf{Morse family over} the projection ${\rm pr}_1\colon T^*Q\times U \rightarrow T^*Q$ onto the first factor if and only if  the matrix
	\begin{equation*}
	{\rm D}_{(q,p,u)} \begin{pmatrix} \dfrac{\partial H}{\partial u}\end{pmatrix}=\begin{pmatrix}\dfrac{\partial^2 H}{\partial q^i \partial u^a} & \dfrac{\partial^2 H}{\partial p_i \partial u^a} & \dfrac{\partial^2 H}{\partial u^a \partial u^b}
	\end{pmatrix}_{(q,p,u)} 
	\end{equation*}
	has maximum rank for all $(q,p,u)\in T^*Q\times U$ such that $\dfrac{\partial H}{\partial u}(q,p,u)=0$.
\end{proposition}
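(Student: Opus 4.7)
The proof is essentially an unwinding of the transversality condition in local coordinates, so my plan is to set things up carefully and then translate the abstract condition into the concrete rank condition.

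First I would introduce coordinates. On $M=T^*Q\times U$ take $(q^i,p_i,u^a)$, and on $T^*M$ the induced bundle coordinates $(q^i,p_i,u^a;\alpha_i,\beta^i,\gamma_a)$, so that a covector reads $\alpha_i\,\mathrm{d}q^i+\beta^i\,\mathrm{d}p_i+\gamma_a\,\mathrm{d}u^a$. Since $\pi=\mathrm{pr}_1$, the fibres of $\pi$ are the $U$-factors and therefore $\ker T_{(q,p,u)}\pi=\mathrm{span}\{\partial/\partial u^a\}$. Consequently
\[
(\ker T\pi)^0=\{(q,p,u;\alpha,\beta,\gamma)\in T^*M\mid \gamma_a=0,\ a=1,\dots,m\},
\]
a subbundle of $T^*M$ of codimension $m=\dim U$. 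On the other hand $\mathrm{d}H(M)\subset T^*M$ is the graph given by $\alpha_i=\partial H/\partial q^i$, $\beta^i=\partial H/\partial p_i$, $\gamma_a=\partial H/\partial u^a$, and hence the intersection $\mathrm{d}H(M)\cap(\ker T\pi)^0$ is parametrized exactly by those points $(q,p,u)$ satisfying $\partial H/\partial u^a=0$.

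Next I would translate transversality into a linear-algebra statement. At a point $\alpha\in\mathrm{d}H(M)\cap(\ker T\pi)^0$ transversality means
\[
T_\alpha\mathrm{d}H(M)+T_\alpha(\ker T\pi)^0=T_\alpha T^*M.
\]
Because $T_\alpha(\ker T\pi)^0$ has codimension $m$ in $T_\alpha T^*M$, and the quotient $T_\alpha T^*M/T_\alpha(\ker T\pi)^0$ can be identified canonically with $\mathbb{R}^m$ via the $\gamma$-coordinates, the condition is equivalent to saying that the composite
\[
T_{(q,p,u)}M\xrightarrow{\ T\mathrm{d}H\ } T_\alpha\mathrm{d}H(M)\hookrightarrow T_\alpha T^*M\twoheadrightarrow \mathbb{R}^m
\]
is surjective.

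The final step is then a direct computation. Writing a tangent vector at $(q,p,u)$ as $(\delta q^i,\delta p_i,\delta u^a)$ and pushing it forward along $\mathrm{d}H$, its $\gamma_a$-component is
\[
\delta\!\left(\frac{\partial H}{\partial u^a}\right)=\frac{\partial^2 H}{\partial u^a\partial q^i}\,\delta q^i+\frac{\partial^2 H}{\partial u^a\partial p_i}\,\delta p_i+\frac{\partial^2 H}{\partial u^a\partial u^b}\,\delta u^b.
\]
Thus the map in question is represented precisely by the matrix in the statement, and its surjectivity onto $\mathbb{R}^m$ is the same as that matrix having maximum rank $m$. This equivalence, evaluated at every $(q,p,u)$ with $\partial H/\partial u(q,p,u)=0$, is the desired characterization.

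The only mildly subtle point—and the one I would take most care with—is the identification $T_\alpha T^*M/T_\alpha(\ker T\pi)^0\cong\mathbb{R}^m$ and the fact that the $\gamma$-components are well-defined in that quotient independently of the chosen coordinates on $Q$ and $U$; this works because $(\ker T\pi)^0$ is the conormal bundle of a submersion, and its annihilator direction is intrinsically the $\mathrm{d}u^a$-directions. Once this is in place, the rest is just reading off the partial derivatives.
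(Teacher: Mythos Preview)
Your argument is correct and is the standard coordinate unwinding of the transversality condition for a Morse family. Note, however, that the present paper does not supply its own proof of this proposition: it is simply quoted from \cite{15Morse}, so there is no in-paper argument to compare against. Your write-up would serve as a perfectly adequate self-contained proof, and the only point worth polishing is the one you already flag, namely the coordinate-independence of the identification $T_\alpha T^*M/T_\alpha(\ker T\pi)^0\cong\mathbb{R}^m$; this follows because $(\ker T\pi)^0$ is a vector subbundle cut out by the equations $\gamma_a=0$, so the $\gamma$-coordinates descend to linear coordinates on the quotient.
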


When the controls are in the interior of the set $U$, the necessary conditions of Pontryagin's Maximum Principle can be rewritten as the following Lagrangian submanifold ${\mathcal L}_H$ of $(T^*T^*Q, \omega_{T^*Q})$:
\[
{\mathcal L}_H= \left\{(q, p, P_q, P_p)\;\left| \exists \; u\in U \; \mbox{s. t. } \begin{array}{l}    P_q=\frac{\partial H}{\partial q}(q,p,u), P_p=\frac{\partial H}{\partial p}(q,p,u), \\  \frac{\partial H}{\partial u}(q,p,u)=0\,  \end{array}\right.\right\}\, .
\]

The OCP is regular if $(\pi_{T^*Q})_{|{\mathcal L}_H}: {\mathcal L}_H\rightarrow T^*Q$ is a local diffeomorphism, otherwise it is called singular.
Observe that in general ${\mathcal L}_H$ is not horizontal, that is, it is not transverse to the fibers of the canonical cotangent projection $\pi_{T^*Q}$. Consequently, it is not the image of the differential of a function on $T^*Q$ \cite{15Morse}.

Due to the symplectomorphism between $(TT^*Q, d_T\omega_Q)$ and $(T^*T^*Q, \omega_{T^*Q})$ described in~\cite{TuHamilton}, the dynamics of an optimal control problem can also be given as the following Lagrangian submanifold in $(TT^*Q, d_T\omega_Q)$:
\begin{equation*}
S_{H}=\sharp_{\omega_{T^*Q}}({\mathcal L}_H)=\{v\in TT^*Q \, | \; i_v\omega_{T^*Q}\in {\mathcal L}_H\} 
\end{equation*}
Thus, a solution of the OCP  is a curve $\sigma$ in $T^*Q$ such that $\dot{\sigma}(t)$ lies in $S_{H}$.

In general, the solutions of the OCP are consistently defined in a submanifold of $T^*Q$ contained in $\tau_{T^*Q}(S_H)\subseteq T^*Q$. Thus, the integrability algorithm~\cite{1995MMT} can be used to obtain the integrable part of $S_{H}$ in $T^*Q$. First, we define
$S^0_H=S_{H}$ and $P_H^0=\tau_{T^*Q}(S_H)$.

The following steps of the algorithm are defined by 
\begin{equation*}
S^k_H=T P_H^{k-1}\cap S_H^{k-1}, \quad P_H^k=\tau_{T^*Q}(S_H^k).
\end{equation*}
If the algorithm stabilizes at step $k_f$ of the constraint algorithm, there exists a final submanifold (possibly empty or singular) satisfying $S^{k_f}_H=T P^{k_f}_H \cap S^{k_f}_H$, that will be denoted by $S^f_{H}$. 
On the base manifold $T^*Q$ and the tangent bundle $TT^*Q$ the algorithm generates the following two sequences of submanifolds in $T^*Q$ and $TT^*Q$, respectively:
\begin{equation*}
\begin{array}{c}
P^f_H \stackrel{i_{k_f}}{\hooklongrightarrow} P^{k_f-1}_H \stackrel{i_{k_f-1}}{\hooklongrightarrow}\cdots  P^1_H \stackrel{i_{1}}{\hooklongrightarrow} P^0_H \stackrel{i_{0}}{\hooklongrightarrow} T^*Q\, ,
\\
S^f_H \stackrel{j_{k_f}}{\hooklongrightarrow} S^{k_f-1}_H \stackrel{j_{k_f-1}}{\hooklongrightarrow}\cdots S^1_H \stackrel{j_{1}}{\hooklongrightarrow} S^0_H \stackrel{j_{0}}{\hooklongrightarrow} TT^*Q\, .
\end{array}
\end{equation*}

As a consequence, for every $\alpha$ in $P^f_H$ there exists $V$ in $T_{\alpha}S^f_H\subset TT^*Q$. 
Hence, the original dynamical system has solution in the submanifold $P^f_H$. 
Denote by $i_f: P^f_H\hookrightarrow T^*Q$ the canonical inclusion and by $\omega_f=i_f^*\omega_{T^*Q}$ the pullback of the canonical symplectic 2-form on $T^*Q$. Note that $\omega_f$ is now a presymplectic 2-form (see Appendix).

In conclusion, a solution to the $OCP$  is a curve $\sigma$ on $P^f_H$ such that there exist controls $u$ satisfying  
\begin{equation*}\label{presymplectic-equation}
i_{\dot{\sigma}(t)}\omega_f (\sigma(t))=dH_{f}^{u}(\dot{\sigma}(t))\, ,
\end{equation*}
 where $H_f^u: S^f_H\rightarrow {\mathbb R}$ is given by $H_f^u(\dot{\sigma}(t))=H(j_f(\dot{\sigma}(t)),u(t))$. Therefore, the dynamics that we need to preserve with our numerical methods is presymplectic instead of the most classical symplectic preservation property.

 \begin{proposition}\label{proppre} The submanifold
 $S^f_H$ is a Lagrangian submanifold of the presymplectic manifold  $(TP^f_H, d_T\omega_f)$.
 \end{proposition}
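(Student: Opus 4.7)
The plan is to reduce the statement to two facts: the Lagrangian character of $S_H$ in the ambient symplectic manifold $(TT^*Q,d_T\omega_Q)$, which is already noted just before the statement, and the compatibility of the tangent lift of $2$-forms with the inclusion $i_f\colon P^f_H\hookrightarrow T^*Q$.

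I would first establish \emph{isotropy}. By construction the integrability algorithm produces a nested sequence $S^f_H\subseteq\cdots\subseteq S^0_H=S_H$, so in particular $S^f_H\subseteq S_H$. Since $S_H$ is Lagrangian in $(TT^*Q,d_T\omega_Q)$, the pullback of $d_T\omega_Q$ to $S^f_H$ vanishes. The inclusion $i_f$ tangent-lifts to $Ti_f\colon TP^f_H\hookrightarrow TT^*Q$, and because $d_T$ commutes with pullbacks of $2$-forms,
\[
(Ti_f)^*\,d_T\omega_Q \;=\; d_T\bigl(i_f^*\omega_Q\bigr) \;=\; d_T\omega_f.
\]
Since $S^f_H\subseteq TP^f_H$ by the stabilization of the algorithm, this yields $d_T\omega_f|_{S^f_H}=0$, so $S^f_H$ is isotropic in $(TP^f_H,d_T\omega_f)$.

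The remaining step, which I expect to be the main obstacle, is to verify the maximality condition $\dim S^f_H=\tfrac{1}{2}\bigl(\dim TP^f_H+\dim\ker d_T\omega_f\bigr)$. My plan is to invoke the coisotropic-reduction principle, which should appear in the Appendix: a Lagrangian submanifold of a symplectic manifold that is contained in a coisotropic submanifold is automatically Lagrangian for the induced presymplectic structure. The claim then reduces to showing that $TP^f_H$ is coisotropic in $(TT^*Q,d_T\omega_Q)$. I would prove this by induction along the algorithm from two observations: (i) the tangent lift sends coisotropic submanifolds of $(T^*Q,\omega_Q)$ to coisotropic submanifolds of $(TT^*Q,d_T\omega_Q)$; and (ii) at each step the relations $S^k_H=TP^{k-1}_H\cap S^{k-1}_H$ and $P^k_H=\tau_{T^*Q}(S^k_H)$ preserve coisotropy, the base case being $P^0_H=\tau_{T^*Q}(S_H)$, which is coisotropic because $S_H$ is Lagrangian and $\tau_{T^*Q}$ is a submersion.

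Combining the isotropy established by the pullback identity with the dimensional equality granted by coisotropic reduction then yields that $S^f_H$ is Lagrangian in $(TP^f_H,d_T\omega_f)$. The delicate point is the propagation of coisotropy through the steps of the algorithm, which is precisely what the Appendix material on presymplectic integrability is designed to supply; everything else in the argument is purely a matter of unwinding the definitions of $S^f_H$, $P^f_H$ and the tangent lift.
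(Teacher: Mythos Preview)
Your isotropy argument is exactly the paper's proof: the paper's entire argument consists of the identity $d_T\omega_f=d_T(i_f^*\omega_Q)=(Ti_f)^*d_T\omega_Q$ together with the fact that $S_H$ (hence $S^f_H\subseteq S_H$) is Lagrangian in $(TT^*Q,d_T\omega_Q)$. The paper does not go further; in particular it never checks the dimension condition~\eqref{eq:dim_preL} explicitly.

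Your second step, however, rests on assumptions that the paper does not supply. The Appendix contains no coisotropic-reduction principle; it only has the definition of a presymplectic Lagrangian submanifold, Proposition~\ref{Prop:preGraphf} on graphs, and Proposition~\ref{propopre} on pulling back along diffeomorphisms. So ``which should appear in the Appendix'' is wrong, and you would have to prove that principle yourself. More seriously, the inductive argument you sketch is not justified: the claim that $P^0_H=\tau_{T^*Q}(S_H)$ is coisotropic ``because $S_H$ is Lagrangian and $\tau_{T^*Q}$ is a submersion'' is not a general fact about images of Lagrangians under submersions, and the assertion that the operations $S^k_H=TP^{k-1}_H\cap S^{k-1}_H$, $P^k_H=\tau_{T^*Q}(S^k_H)$ preserve coisotropy would need a real proof. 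In short, the isotropy half of your plan coincides with the paper's (terse) proof, while the maximality half goes beyond what the paper actually establishes and, as written, contains genuine gaps.
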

 \begin{proof}
 The results follows because $\omega_f=i_f^*\omega_{T^*Q}$, ${S}_H$ is a Lagrangian submanifold of $(TT^*Q, d_T\omega_Q)$ and 
 $
 d_T\omega_f=d_Ti_f^*\omega_{T^*Q}=(Ti_f)^*d_T\omega_{T^*Q}
 $.
 \end{proof}

\section{Presymplectic integrators for optimal control problems}\label{Sec:Presympl}
In this section we will use the cotangent lift of a discretization map to define a presymplectic integrator for optimal control problems, once we have run the integrability algorithm and know the final submanifold $P^f_H$ of $T^*Q$. 

We restrict the cotangent lift of a discretization map $R^{T^*}_d\colon T(T^*Q)\rightarrow T^*Q\times T^*Q$ to the submanifold $TP^f_H$ and define the submanifold $P^f_{H, d}$  of $T^*Q\times T^*Q$ by
\[
P^f_{H, d}\colon =R_d^{T^*}(TP^f_H)\,.
\]
Introducing the inclusion $j_f^d:P^f_{H, d}\hookrightarrow T^*Q\times T^*Q$ the following diagram summarizes the construction process:
\begin{equation*}
\xymatrix{ {{T^*Q\times T^*Q }} && {TT^*Q }\ar[ll]_{{{R_d^{T^*}}}} \ar[rd]^{\tau_{T^*Q}}&&
	T^*T^*Q\ar[ll]_{\sharp_{\omega_Q}}\ar[ld]^{\pi_{T^*Q}}&\ar@{_(->}[l] {\mathcal  L}_H
	\\ 
	P^f_{H, d}\ar[u]_{j^d_f}&&TP^f_H\ar[u]_{j_f}\ar[rd]_{\tau_{P^f_H}} \ar[ll]_{{{R_d^{T^*}|_{TP^f_H}}}}&T^*Q&\\
	&&&P^f_H\ar[u]_{i_f} &
}
\end{equation*}

\begin{definition}\label{method}
We define the OCP geometric integrator as
\[
\left\{\begin{array}{l}
\frac{1}{h}\left(R_d^{T^*}\right)^{-1}(q_k, p_k; q_{k+1}, p_{k+1})\in  \left(S^f_H\right)_{\tau_{T^*Q}\left(\left(R_d^{T^*}\right)^{-1}(q_k, p_k; q_{k+1}, p_{k+1})\right)}\, ,\\
 (q_k, p_k; q_{k+1}, p_{k+1})\in P^f_{H, d}
\end{array}\right.
\]
\end{definition}
Propositions \ref{proppre}  and \ref{propopre} guarantees the presymplecticity of the method.
\begin{proposition}
The OCP geometric integrator in Definition~\ref{method} preserves the presymplectic 2-form 
$\Omega_{f, d}=(j_f^d)^*\Omega_{12}$.
\end{proposition}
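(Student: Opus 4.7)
The plan is to reduce the claim to Proposition~\ref{proppre} by transporting the Lagrangian character of $S^f_H$ through the cotangent-lifted discretization map. The key observation is that $R_d^{T^*}$ restricts to a presymplectomorphism from $(TP^f_H, d_T\omega_f)$ onto $(P^f_{H,d},\Omega_{f,d})$, and this restriction sends $S^f_H$ to the submanifold of $P^f_{H,d}$ that is the integrator itself.

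First I would verify that the restriction $\tilde R := R_d^{T^*}|_{TP^f_H}\colon TP^f_H\to P^f_{H,d}$ is a presymplectomorphism between $(TP^f_H, d_T\omega_f)$ and $(P^f_{H,d},\Omega_{f,d})$. This is the only calculation in the argument and it follows from the commutative square $j_f^d\circ\tilde R = R_d^{T^*}\circ Ti_f$ together with two ingredients already established: the global map $R_d^{T^*}$ is a symplectomorphism between $(TT^*Q,d_T\omega_Q)$ and $(T^*Q\times T^*Q,\Omega_{12})$, and the complete tangent lift of forms is natural under tangent maps, giving $(Ti_f)^*d_T\omega_Q = d_T(i_f^*\omega_Q)=d_T\omega_f$. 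Chaining these identities,
$$\tilde R^*\Omega_{f,d}=\tilde R^*(j_f^d)^*\Omega_{12}=(Ti_f)^*(R_d^{T^*})^*\Omega_{12}=(Ti_f)^*d_T\omega_Q=d_T\omega_f.$$

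Next I would invoke Proposition~\ref{proppre}: $S^f_H$ is Lagrangian in $(TP^f_H,d_T\omega_f)$. Because $\tilde R$ is a diffeomorphism onto its image and intertwines the two presymplectic forms, the image $\tilde R(hS^f_H)=R_d^{T^*}(hS^f_H)\subset P^f_{H,d}$ is a Lagrangian submanifold of $(P^f_{H,d},\Omega_{f,d})$. The integrator of Definition~\ref{method} is exactly this submanifold: the defining condition $\tfrac{1}{h}(R_d^{T^*})^{-1}(q_k,p_k;q_{k+1},p_{k+1})\in S^f_H$ is literally equivalent to $(q_k,p_k;q_{k+1},p_{k+1})\in R_d^{T^*}(hS^f_H)$. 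Invoking the appendix result (Proposition~\ref{propopre}), which is the presymplectic analogue of the classical fact that a Lagrangian submanifold of $(T^*Q\times T^*Q,\Omega_{12})$ encodes a symplectic relation, then delivers the claimed preservation of $\Omega_{f,d}$.

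The main obstacle is conceptual rather than computational: one needs the presymplectic counterpart of the Lagrangian-submanifold/(pre)symplectic-relation dictionary, so that the Lagrangian submanifold found in $(P^f_{H,d},\Omega_{f,d})$ can be legitimately read as a presymplectic integrator. This is precisely the content of the appendix. Everything else reduces to the naturality of $d_T$ under tangent maps and the symplectomorphism property of $R_d^{T^*}$, both already recorded in the body of the paper.
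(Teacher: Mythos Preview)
Your proposal is correct and follows exactly the route the paper indicates: the paper's entire justification is the single sentence ``Propositions~\ref{proppre} and~\ref{propopre} guarantee the presymplecticity of the method,'' and you have spelled out precisely how those two propositions combine, via the commutative square $j_f^d\circ\tilde R=R_d^{T^*}\circ Ti_f$ and the naturality identity $(Ti_f)^*d_T\omega_Q=d_T\omega_f$ already used in the proof of Proposition~\ref{proppre}. Your explicit verification that $\tilde R$ is a presymplectomorphism and your remark that the fiberwise rescaling by $h$ preserves the Lagrangian property are useful details that the paper leaves implicit.
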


\subsection{Example}
As an academic example we consider the singular optimal control problem on $\mathbb{R}^2$ given by the control equations 
$
\dot{x}=f(x)+u_1, \; \dot{y}=y\,
$,
where $u_1$, $u_2 \in \mathbb{R}$,
and the cost functional
\[
\int_{t_0}^{t_f} \left( 
\frac{1}{2}x^2+\frac{1}{2}y^2+xu_1+yu_2+\frac{1}{2}u_1^2\right)\, dt\; .
\]
Then Pontryagin's Hamiltonian is
\[
H(x,y,p_x,p_y,u_1,u_2)=p_x (f(x)+u_1)+p_y y-\frac{1}{2}x^2-\frac{1}{2}y^2-xu_1-yu_2-\frac{1}{2}u_1^2\, .
\]
The Lagrangian submanifold ${\mathcal L}_H$ of $(T^*(T^*\mathbb{R}^2),\omega_{T^*\mathbb{R}^2})$ is
\begin{eqnarray*}
{\mathcal L}_H&=&
\{(x,y, p_x, p_y, P_{x}, P_y, P_{p_x}, P_{p_y})\ |\; 
P_{x}=p_x\, f'(x)-x-u_1, P_y=p_y-y-u_2,\\
&&P_{p_x}=f(x)+u_1, P_{p_y}=y, \; p_x-x-u_1=0, \; y=0\}\, .
\end{eqnarray*}
Therefore, we obtain the following Lagrangian submanifold of $(TT^*\mathbb{R}^2,{\rm d}_T\omega_{\mathbb{R}^2})$:
\begin{eqnarray*}
{S}_H&=&
\{(x,y, p_x, p_y, \dot{x}, \dot{y}, \dot{p}_x, \dot{p}_y)\ |\; 
\dot{x}=f(x)+u_1, \dot{y}=y,\dot{p}_x=-p_x\, f'(x)+x+u_1,\\
&& \dot{p}_y=-p_y+y+u_2, \; p_x-x-u_1=0, \; y=0\}\,.
\end{eqnarray*}
Applying the constraint algorithm we immediately deduce that 
\[
P^f_H=\{(x,0, p_x, p_y)\in {\mathbb R}^4\}\equiv {\mathbb R}^3
\]
which is a presymplectic manifold with the 2-form $\omega_f=dx\wedge dp_x$
whose $\ker \omega_f=\hbox{span}\{ {\partial}/{\partial p_y}\}$. The corresponding Lagrangian submanifold of the presymplectic manifold 
$(TP^f_H, d_T\omega_f)$
is:
\begin{eqnarray*}
S_H^f&=&\{(x, p_x, p_y, \dot{x}, \dot{p}_x, \dot{p}_y)\ |\; 
\dot{x}=f(x)+u_1, \dot{p}_x=-p_x f'(x)+x+u_1,\\
&& \dot{p}_y=-p_y+u_2, \; p_x-x-u_1=0 \}\, .
\end{eqnarray*}
Applying now the discretization map derived in Example \ref{example3} we obtain that 
\[
P^f_{H,d}=\{(x_k, 0, (p_x)_k, (p_y)_k, x_{k+1}, 0, (p_x)_{k+1}, (p_y)_{k+1})\in {\mathbb R}^8\}\equiv {\mathbb R}^3\times {\mathbb R}^3
\]
and the resulting presymplectic integrator is described by the equations: 
\begin{eqnarray*}
\frac{x_{k+1}-x_k}{h}&=&f\left(\frac{x_k+x_{k+1}}{2}\right)+(u_1)_k,\\ \frac{(p_x)_{k+1}-(p_x)_k}{h}&=&-\frac{(p_x)_{k}+(p_x)_{k+1}}{2} f'\left(\frac{x_k+x_{k+1}}{2}\right)+
\frac{x_k+x_{k+1}}{2}+(u_1)_k,\\
 \frac{(p_y)_{k+1}-(p_y)_k}{h}&=&-\frac{(p_y)_{k}+(p_y)_{k+1}}{2}+(u_2)_k, \\
  0&=&\frac{(p_x)_{k}+(p_x)_{k+1}}{2}-\frac{x_k+x_{k+1}}{2}-(u_1)_k\, .
\end{eqnarray*}
The discrete equations for $p_y$ and $u_2$ are decoupled from the rest that can be written in a more compact way for $(x_k,(p_x)_k,x_{k+1},(p_x)_{k+1})$:
\begin{eqnarray*}
\frac{x_{k+1}-x_k}{h}&=&f\left(\frac{x_k+x_{k+1}}{2}\right)+\frac{(p_x)_{k}+(p_x)_{k+1}}{2}-\frac{x_k+x_{k+1}}{2},\\ \frac{(p_x)_{k+1}-(p_x)_k}{h}&=&-\frac{(p_x)_{k}+(p_x)_{k+1}}{2}f'\left(\frac{x_k+x_{k+1}}{2}\right)+
\frac{x_k+x_{k+1}}{2}\\
&&+\frac{(p_x)_{k}+(p_x)_{k+1}}{2}-\frac{x_k+x_{k+1}}{2}\,.
\end{eqnarray*}

\section{Conclusions and future
work}

In this paper we have only studied normal solutions, both regular and singular, for optimal control problems. The same technique can be applied for abnormal solutions where Pontryagin's Hamiltonian does not depend on the cost function. Moreover, we plan to construct geometric integrators for Dirac systems~\cite{Melvin}.

\section*{Appendix: Presymplectic geometry and Lagrangian submanifolds}

As introduced in~\cite{GuzMarr}, a preymplectic structure on a finite dimensional manifold $M$ is a closed 2-form $\omega$ on $M$. We say that $(M, \omega)$ is a presymplectic manifold. The kernel of the presymplectic structure at a point $x$ in $M$ is a vector subspace of the tangent space of $M$ at $x$ that it is  not necessarily zero as in the symplectic case. Remember that 
$
\ker \omega_x=\{v\in T_x M\; |\; i_v \omega_x=0\}
$.
If $\dim \ker\omega_x=0$ for all $x\in M$, the presymplectic structure is non degenerate. Hence, it is a symplectic structure~\cite{90GuiStern,LiMarle}. 

\begin{definition}
A submanifold ${\mathcal L}$ of dimension $r$ of the presymplectic manifold $(M, \omega)$ with canonical inclusion $i: {\mathcal L}\hookrightarrow M$ is said to be Lagrangian if the pullback of $\omega$ by the inclusion vanishes, that is, $i^*\omega=0$, and
\begin{equation}\label{eq:dim_preL}
r=\frac{ \dim M- \dim \left(\ker \omega_x\right)}{2} +\dim (T_x{\mathcal L}\cap \ker \omega_x), \quad \hbox{for all} \;  x\in M\,.
\end{equation}
\end{definition}
When $\omega$ is a symplectic structure, Equation~\eqref{eq:dim_preL} implies that the dimension of ${\mathcal L}$ is half of the dimension of $M$ and we recover the classical definition of Lagrangian submanifold in symplectic geometry~\cite{LiMarle,Weinstein}.

A smooth map $f: M\rightarrow N$ between two presymplectic manifolds $(M, \omega_M)$ and $(N, \omega_N)$ is a presymplectic map if $f$ preserves the presymplectic structures, that is,  $f^*\omega_N=\omega_M$. From that notion, it is possible to construct the following Lagrangian submanifolds.  The proofs come from the above definitions.
\begin{proposition} \label{Prop:preGraphf} Let $(M, \omega_M)$ and $(N, \omega_N)$ be presymplectic manifolds.
If $f: M\rightarrow N$  is a presymplectic diffeomorphism, then 
\[
\hbox{\rm Graph} f=\{(x, f(x)) \, | \, x\in M\}
\]
is a $m$-dimensional  Lagrangian submanifold of $(M\times N, \Omega_{M\times N}=\omega_M-\omega_N)$, where $m$ is the dimension of $M$.
\end{proposition}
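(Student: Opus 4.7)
The plan is to verify the two defining conditions for a Lagrangian submanifold of a presymplectic manifold, as given in the definition just above the statement. The graph is naturally parametrized by the smooth embedding $\phi : M \to M\times N$, $\phi(x)=(x,f(x))$, whose image is exactly $\hbox{Graph}f$; this already gives $\dim \hbox{Graph}f = \dim M = m$, so it remains to verify the isotropy condition $i^*\Omega_{M\times N}=0$ and the dimension equation from the definition.

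For isotropy, I would compute directly. Writing $\Omega_{M\times N}=\mathrm{pr}_1^*\omega_M-\mathrm{pr}_2^*\omega_N$ and using $\mathrm{pr}_1\circ\phi=\mathrm{id}_M$, $\mathrm{pr}_2\circ\phi=f$, pulling back yields $\phi^*\Omega_{M\times N}=\omega_M-f^*\omega_N$. Since $f$ is assumed to be a presymplectic diffeomorphism, $f^*\omega_N=\omega_M$, so this vanishes. Equivalently, $i^*\Omega_{M\times N}=0$ as required.

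For the dimension condition, the key preliminary step is to describe the kernel of $\Omega_{M\times N}$ pointwise. By applying the nondegeneracy test separately to vectors of the form $(v,0)$ and $(0,w)$ one obtains $\ker(\Omega_{M\times N})_{(x,y)} = \ker(\omega_M)_x\oplus \ker(\omega_N)_y$. Next, because $f$ is a presymplectic diffeomorphism, $f_*$ maps $\ker\omega_M$ isomorphically onto $\ker\omega_N$ at corresponding points; so if we set $k=\dim\ker(\omega_M)_x$, then $\dim\ker(\omega_N)_{f(x)}=k$ and $\dim\ker(\Omega_{M\times N})_{(x,f(x))}=2k$.

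Finally I would identify the intersection $T_{(x,f(x))}\hbox{Graph}f \cap \ker\Omega_{M\times N}$. Since $T_{(x,f(x))}\hbox{Graph}f=\{(v,f_*v):v\in T_xM\}$ and $f_*$ sends $\ker\omega_M$ bijectively into $\ker\omega_N$, this intersection is exactly $\{(v,f_*v):v\in\ker(\omega_M)_x\}$, which has dimension $k$. Plugging everything into the dimension formula of the definition gives
\[
\frac{\dim(M\times N)-\dim\ker\Omega_{M\times N}}{2}+\dim(T\hbox{Graph}f\cap \ker\Omega_{M\times N}) = \frac{2m-2k}{2}+k = m,
\]
which matches $\dim\hbox{Graph}f$, completing the verification. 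No step is really hard; the only mildly subtle point, and the one I would emphasize, is that the condition $f^*\omega_N=\omega_M$ together with $f$ being a diffeomorphism forces $f_*$ to restrict to an isomorphism of the characteristic distributions, which is what ensures both the dimensional matching and the description of the intersection.
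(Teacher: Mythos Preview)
Your proposal is correct and follows exactly the approach the paper indicates: the paper only remarks that ``the proofs come from the above definitions,'' and you have faithfully carried out that verification of the isotropy condition and the dimension formula. The one point worth noting explicitly (which you do handle) is that $f$ being a diffeomorphism gives $\dim N=\dim M=m$, so $\dim(M\times N)=2m$ in the final computation.
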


\begin{proposition}\label{propopre}
Let $(N, \omega_N)$ be a presymplectic manifold and $f: M\rightarrow N$ a diffeomorphism. Then $(M,\omega_M)$ is a presymplectic manifold with the presymplectic structure $\omega_M= f^*\omega_N$. Moreover, if ${\mathcal L}_N$  is a Lagrangian submanifold of $(N, \omega_N)$, then ${\mathcal L}_M=f^{-1}({\mathcal L}_N)$ is a Lagrangian submanifold of $(M,\omega_M)$.  
\end{proposition}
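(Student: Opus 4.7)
The plan is to decompose the proposition into two independent parts, both of which reduce to transporting structures along the diffeomorphism $f$ using the naturality of pullback.

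First I would verify that $\omega_M = f^*\omega_N$ is indeed presymplectic. Since pullback commutes with the exterior derivative, $d\omega_M = d(f^*\omega_N) = f^*(d\omega_N) = 0$, so $\omega_M$ is a closed 2-form on $M$, which is precisely the definition of a presymplectic structure given in the Appendix. No non-degeneracy needs to be imposed, so this part is immediate.

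Next I would check that $\mathcal{L}_M = f^{-1}(\mathcal{L}_N)$ is Lagrangian. Denote by $i_M \colon \mathcal{L}_M \hookrightarrow M$ and $i_N \colon \mathcal{L}_N \hookrightarrow N$ the canonical inclusions. Because $f$ is a diffeomorphism, its restriction $f|_{\mathcal{L}_M}\colon \mathcal{L}_M \to \mathcal{L}_N$ is a diffeomorphism and satisfies $f \circ i_M = i_N \circ f|_{\mathcal{L}_M}$. Hence
\[
i_M^*\omega_M \;=\; i_M^*f^*\omega_N \;=\; (f|_{\mathcal{L}_M})^*\, i_N^*\omega_N \;=\; 0,
\]
using that $\mathcal{L}_N$ is Lagrangian in $(N,\omega_N)$. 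This gives the pullback condition in the definition.

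It remains to check the dimensional condition in equation \eqref{eq:dim_preL}. The key observation is that $T_xf\colon T_xM \to T_{f(x)}N$ is a linear isomorphism that intertwines the two presymplectic structures, so $v \in \ker(\omega_M)_x$ if and only if $T_xf(v) \in \ker(\omega_N)_{f(x)}$; consequently $\dim \ker(\omega_M)_x = \dim \ker(\omega_N)_{f(x)}$. Similarly, because $\mathcal{L}_M = f^{-1}(\mathcal{L}_N)$, the isomorphism $T_xf$ sends $T_x\mathcal{L}_M$ onto $T_{f(x)}\mathcal{L}_N$, and therefore
\[
T_xf\bigl(T_x\mathcal{L}_M \cap \ker(\omega_M)_x\bigr) \;=\; T_{f(x)}\mathcal{L}_N \cap \ker(\omega_N)_{f(x)}.
\]
Combined with $\dim M = \dim N$ and $\dim \mathcal{L}_M = \dim \mathcal{L}_N$, the dimension identity \eqref{eq:dim_preL} for $\mathcal{L}_N$ transports verbatim to $\mathcal{L}_M$. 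The only step requiring mild care is confirming that $T_xf$ really maps the intersection $T_x\mathcal{L}_M \cap \ker(\omega_M)_x$ bijectively onto its counterpart; this is the expected main (though minor) obstacle, and it follows cleanly from the two preceding bijections since $T_xf$ is a linear isomorphism sending each factor to its image.
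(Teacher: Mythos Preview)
Your argument is correct and is exactly what the paper intends: it states that ``the proofs come from the above definitions'' and gives no further details, so your direct verification---closedness of $f^*\omega_N$, the isotropy $i_M^*\omega_M=0$ via $f\circ i_M=i_N\circ f|_{\mathcal L_M}$, and transporting the dimension identity \eqref{eq:dim_preL} through the linear isomorphism $T_xf$---is precisely the unpacking the authors leave to the reader.
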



%
%


\begin{thebibliography}{6}
%
\bibitem{AbMaSeBookRetraction}
Absil, P.-A., Mahony, R., and Sepulchre, R.: \emph{Optimization algorithms on matrix manifolds}, Princeton University Press, Princeton, NJ (2008). \url{https://doi.org/10.1515/9781400830244}.




\bibitem {15Morse}
Barbero Li\~n\'an, M., Iglesias Ponte, D., Mart\'in de Diego, D.: Morse families and optimal control problems. \textit{SIAM J. Control Optim.}, 53(1), 414–433. (20 pages) (2015). \url{doi.org/10.1137/120903488}



\bibitem {21MBLDMdD}
Barbero Li\~n\'an, M., Mart\'in de Diego, D.: Retraction maps: a seed of geometric integrators, accepted for publication in \textit{Found. Comput. Math.}(2022). \url{https://arxiv.org/abs/2106.00607}



\bibitem{1931Borsuk} Borsuk. K.:
 Sur les retractes, \emph{Fund. Math.}, 17 (1931).


	\bibitem{2009ChybaSymplIntOpControl} Chyba, M. , Hairer, E.,  Vilmart, G., The role of symplectic intergrators in optimal control, \textit{Optim. Contro Appl. Meth.}, 30(4), 367--382, Wiley (2009). \url{ https://doi.org/10.1002/oca.855} 
	
\bibitem{2009DelgadoIbort}
Delgado-T{\'e}llez, M. and Ibort, A.: A numerical algorithm for singular optimal LQ control systems, \textit{Numer. Algorithms}, 51(4):477--500, (2009). \url{10.1007/s11075-008-9254-z}


\bibitem{doCarmo} do~Carmo, M.~P.: \textit{Riemannian geometry},
Mathematics: Theory \& Applications. Birkh\"{a}user Boston, Inc.,
  Boston, MA (1992). \url{https://doi.org/10.1007/978-1-4757-2201-7}.


	\bibitem{2013FerKoDMdD} Jim\'enez, F.,  Kobilarov, M., Mart\'in de Diego, D.: Discrete Variational Optimal Control, \textit{J. Nonlinear Sci.}, 23, 393--426 (2013).
	\url{https://doi.org/10.1007/s00332-012-9156-z}

\bibitem{GuzMarr} Guzm\'an, E., Marrero, J. C.: Time-dependent mechanics and Lagrangian submanifolds of presymplectic and Poisson manifolds, \textit{J. Phys. A Math.}, 43(50):505201 (2010). \url{10.1088/1751-8113/43/50/505201}

\bibitem{90GuiStern}
Guillemin, V. and Sternberg, S.:
\textit{Symplectic techniques in physics}, Cambridge University Press, Cambridge, second edition (1990).

\bibitem{hairer}
Hairer, E.~, Lubich, C.~, and Wanner, G.~:
 \textit{Geometric numerical integration}, volume~31 of \textit{Springer
  Series in Computational Mathematics}.
Springer, Heidelberg (2010).

  
\bibitem{Hormander}
H{\"o}rmander, L.~:
 Fourier integral operators, I, \textit{Acta Math.}, 127(1-2):79--183 (1971). \url{https://doi.org/10.1007/BF02392052}

  \bibitem{64Jordan} Jordan, B. W., and Polak, E.: Theory of a Class of Discrete
	Optimal Control Systems, \textit{Journal of Electronics and Control}, 17:6, 697-711 (1964). \url{https://doi.org/10.1080/00207216408937740}
	
	\bibitem{1997Jurdjevic}
Jurdjevic, V.: \textit{Geometric Control Theory}, Cambridge University Press, Cambridge (1997).


	
	\bibitem{Melvin} Leok, M. and Ohsawa, T.: Variational and geometric structures of discrete Dirac mechanics, \textit{Foundations of Computational Mathematics}, 11(5): 529--562 (2011).
	
\bibitem{LiMarle} Libermann, P. and Marle, C.-M.:\textit{Symplectic geometry and analytical mechanics}, volume~35 of
  \emph{Mathematics and its Applications}, D. Reidel Publishing Co., Dordrecht (1987).

\bibitem{MW_Acta} Marsden, J.E.\  and West, M.:
Discrete mechanics and variational integrators, \textit{Acta Numer.}, 10:357--514 (2001).
\url{http://dx.doi.org/10.1017/S096249290100006X}.


\bibitem{1995MMT}
Mendella, G., Marmo, G., and Tulczyjew, W.~M.:
Integrability of implicit differential equations, \textit{J. Phys. A}, 28(1):149--163 (1995).


	\bibitem{2011SinaJungeMa} Ober-Bl\"obaum, S., Junge, O, and Marsden, J. E.: Discrete mechanics and optimal control: An analysis, \textit{ESAIM: Control, Optimisation and Calculus of Variations}, Tome 17, no. 2, pp. 322-352 (2011). \url{https://doi.org/10.1051/cocv/2010012}
	
\bibitem{Pontryagin}
Pontryagin, L.~S.,  Boltyanskii, V.~G.,  Gamkrelidze, R.~V.,  and Mishchenko, E.~F.:
\textit{ The mathematical theory of optimal processes}.
 Translated from the Russian by K. N. Trirogoff; edited by L. W.
  Neustadt. Interscience Publishers John Wiley \& Sons, Inc.\, New York-London,
  (1962).


\bibitem{TuHamilton}
Tulczyjew, W.~M.:
 Les sous-vari\'et\'es lagrangiennes et la dynamique hamiltonienne,
\textit{C. R. Acad. Sci. Paris S\'er. A-B}, 283 (8): Av,  15--18 (1976).

%
%

\bibitem{Weinstein}
Weinstein, A.: Symplectic manifolds and their Lagrangian submanifolds, \textit{Advances in Math.}, 6:329--346 (1971).
\url{https://doi.org/10.1016/0001-8708(71)90020-X}

\bibitem{YaIs73}
Yano, K.~ and Ishihara, S.~: \emph{Tangent and cotangent bundles: differential geometry},  Marcel Dekker, Inc., New York, (1973).




\end{thebibliography}
\end{document}